\newcommand{\C}{\mathbb{C}}
\newcommand\precdot{\mathrel{\ooalign{$\prec$\cr
  \hidewidth\raise0.001ex\hbox{$\cdot\mkern0.6mu$}\cr}}}
\newtheorem{theorem}{Theorem}[section]
\newtheorem{def-prop}[theorem]{Definition-Proposition}
\newtheorem{prop}[theorem]{Proposition}
\newtheorem{conj}[theorem]{Conjecture}
\newtheorem{lemma}[theorem]{Lemma}
\newtheorem{cor}[theorem]{Corollary}
\theoremstyle{definition}
\newtheorem{defin}[theorem]{Definition}
\theoremstyle{remark}
\newtheorem*{remark}{Remark}
\title{Spherical Schubert varieties and pattern avoidance}
\author{Christian Gaetz}
\thanks{The author was supported by a National Science Foundation Graduate Research Fellowship under Grant No. 1122374.}
\address{Department of Mathematics, Massachusetts Institute of Technology, Cambridge, MA 02139}
\email{\href{mailto:crgaetz@gmail.com}{{\tt crgaetz@gmail.com}}}
\date{\today}
\begin{document}
\begin{abstract}
A normal variety $X$ is called \emph{$H$-spherical} for the action of the complex reductive group $H$ if it contains a dense orbit of some Borel subgroup of $H$. We resolve a conjecture of Hodges--Yong by showing that their \emph{spherical permutations} are characterized by permutation pattern avoidance. Together with results of Gao--Hodges--Yong this implies that the sphericality of a Schubert variety $X_w$ with respect to the largest possible Levi subgroup is characterized by this same pattern avoidance condition.  
\end{abstract}

\maketitle

\section{Introduction}
\label{sec:intro}

\subsection{Spherical varieties}

Following \cite{brion-luna-vust, luna-vust}, a normal variety $X$ is called \emph{$H$-spherical} for the action of the complex reductive group $H$ if it contains a dense orbit of some Borel subgroup of $H$. Important examples of spherical varieties include projective and affine toric varieties, complexifications of symmetric spaces, and flag varieties (see Perrin's survey \cite{perrin}). Producing families of examples of and classifying spherical varieties is of significant interest \cite{Luna}. In this paper we resolve a conjecture of Hodges--Yong \cite{Spherical-paper}, thereby classifying (maximally) spherical Schubert varieties by a permutation pattern avoidance condition.

\subsection{Schubert varieties and pattern avoidance}

Let $G$ be a complex reductive algebraic group and $B$ be a Borel subgroup. The Bruhat decomposition decomposes $G$ as 
\[
G= \coprod_{w \in W} BwB,
\]
where $W$ denotes the Weyl group of $G$. The closures
\[
X_w=\overline{BwB/B}
\]
of the images of these strata in the flag variety $G/B$ are the \emph{Schubert varieties}, of fundamental importance in algebraic geometry and representation theory.

In the case $G=GL_n(\C)$, the Weyl group $W$ is the symmetric group $S_n$. Beginning with the groundbreaking result of Lakshmibai--Sandhya \cite{Lakshmibai} characterizing smooth Schubert varieties, it has been found that many important geometric and combinatorial properties (see, for example \cite{Karuppuchamy,woo-yong-gorenstein}) of $X_w$ are determined by \emph{permutation pattern avoidance} conditions on $w$. Let $w=w_1\ldots w_n \in S_n$ be a permutation written in one-line notation, and let $p \in S_k$ be another permutation. Then $w$ is said to have an \emph{occurrence} of the pattern $p$ at positions $1 \leq i_1 < \cdots < i_k \leq n$ if $w_{i_1}\ldots w_{i_k}$ are in the same relative order as $p_1 \ldots p_k$. If $w$ does not contain any occurrences of $p$, then $w$ is said to \emph{avoid} $p$.

Under the natural left action of $G$ on $G/B$, the stabilizer of $X_w$ is the parabolic subgroup $P_{J(w)} \subset G$ corresponding to the left descent set $J(w)$ of $w$. The parabolic subgroup $P_{J(w)}$ is not reductive, but contains the \emph{Levi subgroup} $L_{J(w)}$ as a maximal reductive subgroup. Following \cite{Spherical-paper}, we say $X_w$ is \emph{maximally spherical} if it is $L_{J(w)}$-spherical for the induced action of $L_{J(w)}$. Since all Schubert varieties are known to be normal by the work of DeConcini--Lakshmibai \cite{deconcini-lakshmibai} and Ramanan--Ramanathan \cite{rr}, this is equivalent to the existence of a dense orbit inside $X_w$ of a Borel subgroup of $L_{J(w)}$.

\subsection{Hodges and Yong's conjecture}

We consider the symmetric group $S_n$ as a Coxeter group with simple generating set $I=\{s_1,\ldots,s_{n-1}\}$, where $s_i$ is the adjacent transposition $(i \: i+1)$, and we write $J(w)$ for the left descent set of $w \in S_n$. See Section~\ref{sec:background} for background and basic definitions.

\begin{defin}[Hodges and Yong \cite{Spherical-paper}]
\label{def:spherical}
A permutation $w \in S_n$ is \emph{spherical} if it has a reduced word $s_{i_1}\cdots s_{i_{\ell(w)}}$ such that:
\begin{itemize}
    \item[(S.1)] $|\{t \: | \: s_{i_t}=s_j\}| \leq 1$ for $s_j \in I \setminus J(w)$, and
    \item[(S.2)] $|\{t \: | \: s_{i_t} \in C\}| \leq \ell(w_0(C)) + |C|$ for any connected component $C$ of the induced subgraph of the Dynkin diagram on $J(w)$.
\end{itemize}
\end{defin}

\begin{remark}
Hodges and Yong consider a more general class of spherical elements in finite Coxeter groups. Definition~\ref{def:spherical} is the special case which is relevant to Conjecture~\ref{conj:spherical-from-pattern-avoidance} ($J(w)$-spherical elements in $S_n$). 
\end{remark}

Spherical permutations were defined because of Conjecture~\ref{conj:spherical-variety-from-permutation}, which is proven in forthcoming work \cite{gao-hodges-yong} of Gao--Hodges--Yong.

\begin{conj}[Conjectured by Hodges and Yong \cite{Spherical-paper}; proof by Gao--Hodges--Yong \cite{gao-hodges-yong} in preparation]
\label{conj:spherical-variety-from-permutation}
The Schubert variety $X_w$ is maximally spherical if and only if $w$ is spherical.
\end{conj}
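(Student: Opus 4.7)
The plan is to prove both directions of the equivalence by matching reduced word combinatorics to orbit geometry through the Bott--Samelson resolution $\mu_{\mathbf{i}}\colon Z_{\mathbf{i}} \to X_w$ associated to a reduced expression $\mathbf{i} = (i_1,\ldots,i_{\ell(w)})$ for $w$. Since $Z_{\mathbf{i}}$ is an iterated $\mathbb{P}^1$-bundle and $\mu_{\mathbf{i}}$ is birational, a dense orbit on $X_w$ under a Borel subgroup $B_L \subseteq L_{J(w)}$ can be analyzed by pulling back to $Z_{\mathbf{i}}$, where generic points admit an explicit parametrization as a product of one-parameter root subgroups, one per letter of $\mathbf{i}$.

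For the direction ``$w$ spherical $\Rightarrow$ $X_w$ maximally spherical'', I would fix a reduced word $\mathbf{i}$ satisfying (S.1) and (S.2) and use it to manufacture a dense $B_L$-orbit. Each letter of $\mathbf{i}$ contributes one parameter to a chart near a generic point of $X_w$, so one needs to express each such parameter as arising from the action of $B_L$. Condition (S.1) lines up the unique appearance of each $s_j \notin J(w)$ with a transverse root subgroup absorbed by a generic torus element, while condition (S.2) bounds the number of appearances of each $s_j \in C$ for a Dynkin component $C \subseteq J(w)$ by $\ell(w_0(C)) + |C|$, which matches the effective dimension on $G/B$ of the corresponding factor of $B_L$ (the $+|C|$ contribution coming from the maximal torus of the Levi factor $L_C$). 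A dimension count then forces the constructed $B_L$-orbit to have dimension $\ell(w)$, hence to be dense.

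For the converse, assume $X_w$ is maximally spherical and fix a dense $B_L$-orbit $\O$. Using the Richardson--Springer weak order on the poset of $B_L$-orbit closures in $X_w$, one extracts a saturated chain from a $B_L$-fixed point up to $\O$ whose covers correspond to root subgroup multiplications; reading off the sequence of simple reflections involved should yield a reduced expression $\mathbf{i}$ for $w$ whose letter multiplicities realize the bounds in (S.1) and (S.2), with the dimension of $\O$ forcing the inequalities. The main obstacle will be this converse direction: extracting a combinatorial witness from the mere geometric existence of a dense orbit, and in particular ensuring that the chain of orbits can always be arranged to record a \emph{reduced} expression rather than a longer non-reduced one. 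A natural fallback is induction on $\ell(w)$: choose a suitable simple reflection $s_j \in J(w)$, restrict the $B_L$-action to the Schubert subvariety $X_{s_j w}$, and show that maximal sphericality descends compatibly, so that a good reduced word for $s_j w$ can be extended by $s_j$ to one for $w$.
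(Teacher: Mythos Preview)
The paper does not contain a proof of this statement. Conjecture~1.4 is explicitly attributed to forthcoming work of Gao--Hodges--Yong (reference \cite{gao-hodges-yong} in the paper), and the present paper treats it as a black box. The paper's own contribution is Theorem~1.5, the purely combinatorial equivalence between sphericality of $w$ (in the sense of Definition~1.1) and avoidance of the twenty-one patterns in $P$; the geometric equivalence you are attempting here is only invoked in order to deduce the corollary about $X_w$.

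So there is nothing in the paper to compare your proposal against. As a standalone sketch, your forward direction is along reasonable lines: the Bott--Samelson parametrization together with the dimension bookkeeping in (S.1)--(S.2) is indeed the natural heuristic for why those conditions should produce a dense $B_L$-orbit, though turning the parameter count into an honest proof that the orbit map is dominant requires more than matching dimensions. Your converse is, as you acknowledge, only a plan with a visible gap: the Richardson--Springer chain argument does not obviously hand you a \emph{reduced} word for $w$, nor is it clear that maximal sphericality descends along $X_w \supset X_{s_j w}$ in the way your induction needs (the Levi $L_{J(s_j w)}$ can differ from $L_{J(w)}$). If you want to pursue this, you should consult the Gao--Hodges--Yong paper directly rather than this one.
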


This geometric property is linked to permutation pattern avoidance by Conjecture~\ref{conj:spherical-from-pattern-avoidance}.

\begin{conj}[Hodges and Yong \cite{Spherical-paper}]
\label{conj:spherical-from-pattern-avoidance}
A permutation $w$ is spherical if and only if it avoids the twenty one patterns in $P$:
\begin{align*}
P&=\{24531,25314,25341,34512,34521,35412,35421,42531,45123,45213,45231,\\
&45312,52314,52341,53124,53142,53412,53421,54123,54213,54231\}.
\end{align*}
\end{conj}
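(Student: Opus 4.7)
The plan is to prove both directions of the equivalence separately. For the implication that spherical implies pattern-avoiding, I would work contrapositively: show every permutation containing some $p \in P$ is non-spherical. For the converse, I would construct, inductively, a reduced word satisfying (S.1)–(S.2) for any permutation avoiding all patterns in $P$.

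For the first direction I would proceed in two steps. First, verify by direct computation that each of the twenty-one patterns $p \in P$ is itself non-spherical. For each such $p$, compute $J(p)$ and the associated linear constraints (S.1) and (S.2) on the multiset $(c_1, \ldots, c_{|p|-1})$ of letter counts in a reduced word of $p$; then check that no multiset satisfying these constraints and summing to $\ell(p)$ is actually realized by any reduced word of $p$ (the realizability check is nontrivial, because of the freedom provided by braid moves $s_is_{i+1}s_i \leftrightarrow s_{i+1}s_is_{i+1}$, which change letter multiplicities). This is a finite verification amenable to computer check or careful hand analysis. Second, establish a hereditary lemma: if $w$ contains some $p \in P$ as a pattern, then $w$ inherits the non-sphericality of $p$. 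The natural approach is to show that sphericality is closed under pattern containment, by restricting a hypothetical reduced word of $w$ witnessing (S.1)–(S.2) to the wires indexed by the pattern positions and extracting a reduced word of $p$ together with bounds on its letter counts. The subtlety here is that simple reflections of $w$ relabel on passing to $p$, and the descent components of $J(p)$ need not be sub-components of those of $J(w)$, so the inheritance of the constraints must be checked carefully.

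For the converse direction I would induct on $\ell(w)$ using the following peel-off: given a non-identity $w$ avoiding every pattern in $P$, choose a simple reflection $s_i$ so that $w' := ws_i$ is shorter, still avoids all patterns in $P$, and admits by induction a reduced word to which $s_i$ can be appended while maintaining (S.1)–(S.2). The obstructions to such an extension are that appending $s_i$ may cause some $c_j$ to exceed $1$ when $s_j$ is not a left descent of $w$, or may (via a change of $J$ in passing from $w$ to $w'$) merge connected components of the descent graph so as to tighten constraint (S.2) retroactively. The main obstacle, and the technical heart of the proof, is showing that such an $s_i$ always exists when $w$ avoids $P$: this should amount to reverse-engineering each failure mode of the peel-off into an explicit occurrence of one of the twenty-one patterns. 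I expect this to require a detailed case analysis organized by the descent-component structure of $J(w)$ and its interaction with the inversion set (or Rothe diagram) of $w$, with the 21 patterns appearing as the minimal configurations that simultaneously block every candidate simple reflection.
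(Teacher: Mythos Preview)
Your plan is an outline rather than a proof, and both of its load-bearing steps rest on claims you do not establish. More importantly, it misses the reformulation that makes the argument tractable.

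The paper never works with Definition~\ref{def:spherical} directly. It first invokes a result of Gao--Hodges--Yong (Proposition~\ref{prop:alternate-definition-of-spherical}): $w$ is spherical if and only if $w_0(J(w))\,w$ is Boolean. Combined with Tenner's characterization (Theorem~\ref{thm:boolean-characterization}), this replaces the existential condition on reduced words by the concrete condition that the single permutation $w_0(J(w))w$ avoid $321$ and $3412$. The paper then introduces \emph{divisible pairs} (Proposition~\ref{prop:divisible-equals-not-boolean}, Corollary~\ref{cor:spherical-not-divisible}) and splits $P=P^{321}\cup P^{3412}$ according to which of the two forbidden patterns $w_0(J(p))p$ contains. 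Both directions become short direct arguments (Lemmas~\ref{lem:avoid-implies-spherical} and~\ref{lem:spherical-implies-avoid}), with no induction on length and no hereditary lemma.

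Your hereditary step is a genuine obstacle, not a routine check. Restricting a wiring diagram of $w$ to the pattern wires does yield a reduced word for $p$, but the constraints (S.1)--(S.2) are governed by $J(w)$ versus $J(p)$, and these are essentially unrelated: left descents are statements about \emph{consecutive values}, and consecutive values of $w$ need not restrict to consecutive values of $p$. A crossing lying in a large connected component of $J(w)$ (where (S.2) is generous) can, after restriction and relabeling, become a letter outside $J(p)$ entirely (where (S.1) forces multiplicity one). You flag this as a ``subtlety,'' but give no mechanism for overcoming it; absent the Boolean reformulation I do not see how to prove that sphericality passes to patterns short of proving the full theorem. Likewise, your inductive peel-off must simultaneously guarantee that $ws_i$ still avoids $P$ and control how $J$ changes in passing from $w$ to $ws_i$; you have not exhibited any $s_i$ that avoids all the failure modes you list. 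The paper's route via $w_0(J(w))w$ sidesteps both issues entirely.
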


Our main result resolves Conjecture~\ref{conj:spherical-from-pattern-avoidance}:

\begin{theorem}
\label{thm:main}
A permutation $w$ is spherical if and only if it avoids the patterns in $P$.
\end{theorem}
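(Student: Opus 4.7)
The plan is to treat the two directions of Theorem~\ref{thm:main} separately. The forward direction should reduce to a hereditary lemma plus a finite verification, while the backward direction is where the real combinatorial work lies.

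For the forward direction ($w$ spherical $\Rightarrow$ $w$ avoids $P$), I would first prove a hereditary lemma: \emph{if $w$ is spherical and $p$ occurs as a pattern of $w$, then $p$ is spherical}. Given an occurrence of $p \in S_k$ at positions $j_1 < \cdots < j_k$, a reduced word $\mathbf{w}$ for $w$ satisfying (S.1) and (S.2) yields a reduced word $\mathbf{p}$ for $p$ by restricting the wiring diagram of $\mathbf{w}$ to the wires indexed by $j_1, \ldots, j_k$. The transfer of (S.1) and (S.2) from $\mathbf{w}$ to $\mathbf{p}$ amounts to checking that a non-descent $s_a \notin J(p)$ pulls back to a range of non-descents of $w$ (so $s_a$ appears at most once in $\mathbf{p}$), and that each connected component of the Dynkin diagram on $J(p)$ sits inside a connected component of the Dynkin diagram on $J(w)$, using monotonicity of the bound $\ell(w_0(C)) + |C|$ in $|C|$. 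Combined with a finite verification --- by hand or computer --- that each of the twenty-one patterns in $P$ is itself non-spherical, this establishes the forward direction.

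For the backward direction ($w$ avoids $P$ $\Rightarrow$ $w$ spherical), I would proceed by strong induction on $n$, with base cases cleared by direct enumeration since every pattern in $P$ has length five. If $w$ has a fixed corner --- $w(1)=1$, $w(n)=n$, $w^{-1}(1)=1$, or $w^{-1}(n)=n$ --- the problem reduces to a smaller instance in $S_{n-1}$ to which the inductive hypothesis applies. In the remaining case, the plan is to exploit $P$-avoidance to build an explicit reduced word satisfying (S.1) and (S.2): construct, for each connected component $C$ of the Dynkin diagram on $J(w)$, a partial reduced word using at most $\ell(w_0(C)) + |C|$ letters from $C$, and splice these together with letters from $I \setminus J(w)$, each used at most once.

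The main obstacle is the backward direction. Because reduced words for $w$ can be transformed by commutation and braid moves that alter individual generator counts, no single reduced word is canonical for testing (S.1) and (S.2), and the global nature of (S.2) makes a direct existence proof delicate. The essential input is a tight structural correspondence between the twenty-one forbidden patterns and the obstructions to existence of a good reduced word; I expect this to be established via a contrapositive argument that, given an arbitrary reduced word for $w$ which is forced to violate (S.1) or (S.2), extracts an explicit occurrence of some $p \in P$ by tracing which generator-reuses are forced by the descent structure.
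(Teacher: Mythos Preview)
Your plan has genuine gaps in both directions. For the forward direction, the hereditary lemma is in fact true (it is a consequence of the theorem), but your sketched proof does not establish it. The assertions that ``a non-descent $s_a \notin J(p)$ pulls back to a range of non-descents of $w$'' and that ``each connected component of the Dynkin diagram on $J(p)$ sits inside a connected component of the Dynkin diagram on $J(w)$'' are not general facts about pattern occurrences: for instance, $w=52341$ contains $p=321$ at positions $1,3,5$, yet $J(p)=\{s_1,s_2\}$ is a single component of size two while $J(w)=\{s_1,s_4\}$ has only singleton components. Your transfer argument would therefore need to \emph{use} the sphericality of $w$ (equivalently, properties (S.1)--(S.2) of the chosen reduced word) in an essential way to rule out such behaviour, and you do not indicate how the counts of generators in the restricted wiring diagram are controlled by the counts in $\mathbf{w}$. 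For the backward direction you openly concede there is no argument, only an expectation that a forced violation of (S.1) or (S.2) can be traced back to an occurrence of some $p\in P$; this is precisely the content of the theorem and is where the work lies.

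The paper takes an entirely different route and never works directly with conditions (S.1)--(S.2). It invokes a reformulation due to Gao--Hodges--Yong (Proposition~\ref{prop:alternate-definition-of-spherical}): $w$ is spherical if and only if $w_0(J(w))\,w$ is Boolean, i.e.\ avoids $321$ and $3412$. This is recast via a new notion of \emph{divisible pairs} (Proposition~\ref{prop:divisible-equals-not-boolean} and Corollary~\ref{cor:spherical-not-divisible}): $w$ is spherical if and only if $(w_0(J(w)),w)$ is not divisible. Both directions are then proved by short contrapositive arguments using a decomposition $P=P^{321}\cup P^{3412}$ according to whether $w_0(J(p))\,p$ contains $321$ or $3412$; each piece admits a uniform description in terms of the relative positions of the values $5,3,1$ (respectively $4,5,1,2$) and the location of the remaining value, which is exactly what one matches against the divisibility condition. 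No induction, no explicit reduced-word construction, and no hereditary lemma are needed.
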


Combining this result with Gao--Hodges--Yong's proof of Conjecture~\ref{conj:spherical-variety-from-permutation}, we thus obtain a characterization of maximally spherical Schubert varieties in terms of pattern avoidance.

\begin{cor}
The Schubert variety $X_w$ is maximally spherical if and only if $w$ avoids the patterns from $P$.
\end{cor}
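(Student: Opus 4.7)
The proof of the corollary is a direct composition of two biconditionals, so my plan is essentially to chain them together and verify that the two statements being linked are formulated in a compatible way.

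First I would invoke Theorem~\ref{thm:main}, which has just been established in the paper: $w \in S_n$ is spherical (in the sense of Definition~\ref{def:spherical}) if and only if $w$ avoids each of the twenty-one patterns listed in $P$. Second, I would invoke Conjecture~\ref{conj:spherical-variety-from-permutation}, attributed to Hodges--Yong and proved in the forthcoming work of Gao--Hodges--Yong \cite{gao-hodges-yong}, which asserts that $X_w$ is maximally spherical (i.e., $L_{J(w)}$-spherical, using that $X_w$ is normal by \cite{deconcini-lakshmibai, rr}) if and only if $w$ is spherical. Composing these two biconditionals yields the desired equivalence between maximal sphericity of $X_w$ and avoidance of the patterns in $P$.

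The only real point to check is that the notion of \emph{spherical permutation} employed in Theorem~\ref{thm:main} agrees on the nose with the notion used in Conjecture~\ref{conj:spherical-variety-from-permutation}; the remark following Definition~\ref{def:spherical} confirms that both statements refer to the same $J(w)$-spherical elements of $S_n$, so no translation between conventions is needed. There is no substantive obstacle in this corollary beyond the two deep inputs it cites; the argument is a one-line deduction whose content is entirely carried by Theorem~\ref{thm:main} and by the Gao--Hodges--Yong resolution of Conjecture~\ref{conj:spherical-variety-from-permutation}.
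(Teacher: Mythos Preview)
Your proposal is correct and matches the paper's approach exactly: the paper simply states that the corollary follows by combining Theorem~\ref{thm:main} with Gao--Hodges--Yong's proof of Conjecture~\ref{conj:spherical-variety-from-permutation}, without even including a separate proof environment. Your additional remark verifying that the two notions of ``spherical permutation'' coincide is a harmless (and reasonable) elaboration of what the paper leaves implicit.
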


The following result, an immediate consequence of Theorem~\ref{thm:main}, was conjectured in \cite{Spherical-paper} and proven in \cite{Proper} using probabilistic methods.

\begin{cor}
\[
\lim_{n \to \infty} |\{\text{spherical permutations } w \in S_n\}|/n! = 0.
\]
\end{cor}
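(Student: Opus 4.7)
The plan is to combine Theorem~\ref{thm:main} with the Marcus--Tardos theorem (the former Stanley--Wilf conjecture). By Theorem~\ref{thm:main}, any spherical permutation $w \in S_n$ avoids every pattern in $P$, so in particular it avoids a single fixed pattern $p \in P$ such as $p = 24531$. Since $p$ has length $5 \geq 2$, the Marcus--Tardos theorem supplies a constant $c = c(p) > 0$ depending only on $p$ such that
\[
|\{w \in S_n : w \text{ avoids } p\}| \leq c^n.
\]

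Therefore the number of spherical permutations in $S_n$ is also at most $c^n$, and since $n!$ grows superexponentially we obtain
\[
\frac{|\{\text{spherical } w \in S_n\}|}{n!} \leq \frac{c^n}{n!} \xrightarrow{n \to \infty} 0,
\]
which is the desired conclusion.

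There is no serious obstacle here: all the real work was done in Theorem~\ref{thm:main}, which reduces sphericality to a classical pattern avoidance condition, after which Marcus--Tardos (or indeed any simpler bound of the form $o(n!)$ for singleton pattern classes, e.g.\ a direct Fekete-style argument) immediately yields the claim. One could alternatively invoke the (much older and more elementary) fact that classes avoiding any fixed pattern of length at least $2$ have density tending to $0$ in $S_n$, avoiding the full strength of Marcus--Tardos; either suffices.
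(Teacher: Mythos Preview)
Your proof is correct and essentially identical to the paper's own argument: both invoke Theorem~\ref{thm:main} and then apply the Marcus--Tardos theorem (Stanley--Wilf) to conclude that the number of spherical permutations grows at most exponentially, hence is $o(n!)$. The only cosmetic difference is that you single out one pattern from $P$ while the paper refers to the full set; this is immaterial.
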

\begin{proof}
The Stanley--Wilf Conjecture, now a theorem of Marcus and Tardos \cite{Marcus-Tardos}, says that the number of permutations in $S_n$ avoiding any fixed set $Q$ of patterns is bounded above by $C^n$ for some constant $C$. Thus Theorem~\ref{thm:main} implies that 
\[
|\{\text{spherical permutations } w \in S_n\}|
\]
grows at most exponentially.
\end{proof}

\subsection{Outline}

Section~\ref{sec:background} recalls some basic definitions and facts about Bruhat order as well as a result of Tenner \cite{Tenner} characterizing Boolean intervals in Bruhat order. In Section~\ref{sec:divisible} we introduce the notion of \emph{divisible pairs} of permutations and connect these to Boolean permutations and spherical permutations. Divisible pairs, along with a helpful decomposition of the set $P$ of patterns, are applied in Section~\ref{sec:proof} to prove Theorem~\ref{thm:main}.

\section{Background} 
\label{sec:background}

\subsection{Bruhat order}
For $i=1,\ldots,n$, let $s_i$ denote the adjacent transposition $(i \: i+1)$ in the symmetric group $S_n$; the symmetric group is a Coxeter group with respect to the generating set $s_1,\ldots,s_{n-1}$ (see \cite{Bjorner-Brenti} for background on Coxeter groups). For $w \in S_n$, and expression
\[
w=s_{i_1}\cdots s_{i_{\ell}}
\]
of minimum length is a \emph{reduced word} for $w$, and in this case $\ell=\ell(w)$ is the \emph{length} of $w$.

The \emph{(right) weak order} is the partial order $\leq_R$ on $S_n$ with cover relations $w \lessdot_R ws_i$ whenever $\ell(ws_i)=\ell(w)+1$. The \emph{Bruhat order} is the partial order $\leq$ on $S_n$ with cover relations $w \lessdot wt$ for $t$ a 2-cycle such that $\ell(wt)=\ell(w)+1$. Both posets have the identity permutation $e$ as their unique minimal element.

For a permutation $w=w_1\ldots w_n \in S_n$ and integers $1 \leq a \leq b \leq n$, we write $w[a,b]$ for the set $\{w_a,w_{a+1},\ldots,w_b\}$. For two $k$-subsets $A,B$ of $[n]\coloneqq \{1,\ldots,n\}$ write $A \preceq B$ if $a_1 \leq b_1, \ldots, a_k \leq b_k$, where $A=\{a_1,\ldots,a_k\}$ and $B=\{b_1,\ldots,b_k\}$ with $a_1<\cdots<a_k$ and  $b_1<\cdots<b_k$. The following well-known property of Bruhat order will be useful:

\begin{prop}[Ehresmann \cite{Ehresmann}]
\label{prop:bruhat-implies-prefix-dominates}
Let $v,w \in S_n$, then $v \leq w$ if and only if 
\[
v[1,i] \preceq w[1,i]
\]
for all $i=1,\ldots,n$.
\end{prop}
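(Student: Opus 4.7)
The plan is to prove the two implications separately. For the forward direction, transitivity of $\preceq$ reduces to checking a single Bruhat cover $v \lessdot w$; writing $w = v \cdot (a\ b)$ with $a < b$, $v_a < v_b$, and no intermediate index $c$ satisfying $v_a < v_c < v_b$, the prefixes $v[1,i]$ and $w[1,i]$ agree for $i < a$ or $i \geq b$, while for $a \leq i < b$ the prefix $w[1,i]$ is obtained from $v[1,i]$ by replacing the single element $v_a$ by the strictly larger $v_b$; this can only increase the prefix in the $\preceq$ order.

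For the reverse direction, it suffices to establish the following construction: whenever $v \neq w$ satisfies $v[1,i] \preceq w[1,i]$ for all $i$, there exists a Bruhat cover $v \lessdot v'$ with $v'[1,i] \preceq w[1,i]$ for all $i$. Iterating the construction produces a strictly ascending Bruhat chain $v \lessdot v' \lessdot v'' \lessdot \cdots$ of permutations each prefix-dominated by $w$; since $\ell$ is bounded by $\binom{n}{2}$ the chain must terminate, and the only obstruction to producing another cover is that the current term already equals $w$, yielding a Bruhat chain from $v$ to $w$ and hence $v \leq w$.

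To construct $v'$: let $j$ be the smallest index with $v_j \neq w_j$, so that $v[1,j-1] = w[1,j-1]$ as sets. Comparing the sorted prefixes $v[1,j]$ and $w[1,j]$, which differ only by the single insertion of $v_j$ versus $w_j$ into this common set, the prefix condition at position $j$ forces $v_j < w_j$. Since $w_j \notin v[1,j-1]$ and $w_j \neq v_j$, we have $w_j \in v[j+1,n]$, so the value $m := \min\{v_\ell : \ell > j,\, v_\ell > v_j\}$ satisfies $v_j < m \leq w_j$; let $k > j$ be the position with $v_k = m$. Minimality of $m$ guarantees that no intermediate index carries a value in the open interval $(v_j, m)$, so $v \lessdot v' := v \cdot (j\ k)$ is indeed a Bruhat cover. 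The main obstacle is verifying that prefix dominance survives the swap: only the prefixes with $j \leq i < k$ change, each by replacing $v_j$ with the strictly larger $m \leq w_j$, and a careful sorted comparison against $w[1,i]$---using $m \leq w_j$ together with $w[1,j-1] = v[1,j-1]$ to align sorted ranks---confirms $v'[1,i] \preceq w[1,i]$.
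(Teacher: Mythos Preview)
The paper does not supply a proof of this proposition; it is quoted as Ehresmann's classical tableau criterion and used as a black box, so there is no argument in the paper to compare your attempt against.

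Your forward implication is correct and standard: transitivity of $\preceq$ reduces the question to a single Bruhat cover $v \lessdot v\cdot(a\ b)$, and for $a \le i < b$ the prefix $(v\cdot(a\ b))[1,i]$ is obtained from $v[1,i]$ by replacing one element with a strictly larger one, which can only move the set upward in $\preceq$.

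Your reverse implication, however, has a genuine gap: the permutation $v' = v\cdot(j\ k)$ you build need not remain prefix-dominated by $w$. Take $v = 1342$ and $w = 3142$ in $S_4$; one checks directly that $v[1,i]\preceq w[1,i]$ for every $i$. Here $j=1$, $v_j=1$, $w_j=3$, and your rule $m=\min\{v_\ell:\ell>1,\ v_\ell>1\}=\min\{3,4,2\}=2$ is attained at position $k=4$, so $v'=2341$. But then
\[
v'[1,2]=\{2,3\}\not\preceq\{1,3\}=w[1,2],
\]
and indeed $\ell(v')=\ell(w)=3$ with $v'\neq w$, so $v'\not\leq w$ in Bruhat order at all. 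Thus the sentence ``a careful sorted comparison \ldots\ confirms $v'[1,i]\preceq w[1,i]$'' cannot be carried out: the inequality $m\leq w_j$ controls where $m$ itself lands in the sorted list of $v'[1,i]$, but deleting $v_j$ shifts every larger entry of $v[1,i]$ one slot to the left, and nothing you have established prevents one of those shifted entries from overtaking the corresponding sorted entry of $w[1,i]$. A correct saturated-chain construction requires a more careful choice of transposition (for instance, taking $k$ to be the \emph{smallest position} $>j$ with $v_j<v_k\le w_j$, rather than the position of the smallest such value); see the standard treatments in Bj\"orner--Brenti or in Fulton's \emph{Young Tableaux}.
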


A generator $s_i$ is a \emph{(left) descent} of $w$ if $\ell(s_iw)<\ell(w)$ (equivalently, if $w^{-1}(i+1)<w^{-1}(i)$). We write $J(w)$ for the set of descents of $w$. For any $J \subseteq \{s_1,\ldots,s_{n-1}\}$, we write $w_0(J)$ for the unique permutation of maximum length lying in the subgroup of $S_n$ generated by $J$. Explicitly, the one-line notation for $w_0(J)$ is an increasing sequence of consecutive decreasing runs, where decreasing run consists of $i+d,i+d-1,\cdots,i$ whenever $s_i,s_{i+1},\ldots, s_{i+d-1} \in J$ while $s_{i-1},s_{i+d} \not \in J$.

\subsection{Boolean permutations}

\begin{theorem}[Tenner \cite{Tenner}]
\label{thm:boolean-characterization}
The following are equivalent for a permutation $w \in S_n$:
\begin{itemize}
    \item[\normalfont{(1)}] The interval $[e,w]$ in Bruhat order is isomorphic to a Boolean lattice,
    \item[\normalfont{(2)}] No simple generator $s_i$ appears more than once in a reduced word for $w$,
    \item[\normalfont{(3)}] $w$ avoids the patterns $321$ and $3412$.
\end{itemize}
\end{theorem}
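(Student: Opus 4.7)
My plan is to establish the two equivalences (1)~$\Leftrightarrow$~(2) and (1)~$\Leftrightarrow$~(3), from which (2)~$\Leftrightarrow$~(3) follows immediately.

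For (1)~$\Leftrightarrow$~(2), I would apply the Subword Property of Bruhat order: fixing a reduced word $s_{i_1}\cdots s_{i_\ell}$ of $w$, every $v\in[e,w]$ is represented by at least one reduced subword, so $|[e,w]|\le 2^{\ell(w)}$, with equality precisely when $[e,w]$ is the Boolean lattice of rank $\ell(w)$. If all $s_{i_j}$ are distinct, I would prove by induction on subset size, using the Strong Exchange Condition, that every subword is reduced and distinct subsets of positions give distinct elements of $W$, yielding $|[e,w]|=2^{\ell(w)}$. Conversely, if some generator $s_j$ appears twice in the reduced word, a short exchange argument produces two distinct subsets with equal subword products, so $|[e,w]|<2^{\ell(w)}$.

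For (1)~$\Leftrightarrow$~(3), I argue via a sub-interval embedding. If $w$ contains a $321$ or $3412$ pattern at positions $i_1<\cdots<i_k$, then using Proposition~\ref{prop:bruhat-implies-prefix-dominates} one verifies that every permutation obtained from $w$ by rearranging the values $w(i_1),\ldots,w(i_k)$ among those positions (while fixing all other entries) lies in $[e,w]$, and that these $k!$ rearrangements form a Bruhat sub-interval isomorphic to $[e,p]$ in $S_k$, where $p$ is the pattern. Since $|[e,321]|=6<2^3$ and $|[e,3412]|=14<2^4$, neither interval is Boolean, so $[e,w]$ contains a non-Boolean sub-interval and is itself not Boolean. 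For the converse, suppose $w$ avoids both patterns; I would induct on $\ell(w)$, peeling off a carefully chosen right descent $s_j$ such that $ws_j$ still avoids both patterns (which can be checked by case analysis on how a potential forbidden pattern in $ws_j$ lifts to one in $w$) and such that $s_j$ does not appear in a no-repeat reduced word for $ws_j$ produced by induction; appending $s_j$ to this word then yields a no-repeat reduced word for $w$, and $[e,w]$ is Boolean by (2)~$\Rightarrow$~(1).

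I expect the main obstacle to lie in the descent selection in the converse of (1)~$\Leftrightarrow$~(3). A generic right descent $s_j$ of $w$ may already appear in every reduced word of $ws_j$, blocking the induction -- indeed, $w=3412$ itself exhibits this failure, its unique right descent $s_2$ reappearing in every reduced word of $3142$. The content of the theorem in this direction is therefore that avoiding both patterns is exactly what guarantees a ``fresh'' descent is always available (likely the rightmost right descent, or one where $w(j+1)$ is a record minimum among $w(j+1),\ldots,w(n)$), and the main combinatorial verification is that this choice works. A potentially cleaner alternative would be to prove (3)~$\Rightarrow$~(2) directly via the heap of a reduced word: the heap of a $321$-avoiding $w$ is an invariant, and I would aim to show that additionally avoiding $3412$ corresponds exactly to the labels on the heap being pairwise distinct.
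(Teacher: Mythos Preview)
The paper does not prove this statement; Theorem~\ref{thm:boolean-characterization} is quoted from Tenner as a background result and used without proof, so there is no argument in the paper to compare against. I comment only on the proposal itself.

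Your treatment of (1)~$\Leftrightarrow$~(2) via the Subword Property and cardinality is sound: Bruhat intervals are graded by length, so $|[e,w]|<2^{\ell(w)}$ genuinely rules out a Boolean interval, and in the distinct-generator case one does get exactly $2^{\ell(w)}$ elements as you describe.

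There is, however, a real gap in your argument that ``$w$ contains $321$ or $3412$ $\Rightarrow$ $[e,w]$ is not Boolean.'' The $k!$ rearrangements of the pattern values among the pattern positions do \emph{not} in general form a Bruhat interval, and a non-Boolean \emph{subposet} is not enough. Take $w=52341$, which contains $321$ at positions $1,3,5$; the six rearrangements (fixing $w_2=2$, $w_4=4$) are $12345$, $32145$, $12543$, $32541$, $52143$, $52341$, of lengths $0,3,3,6,6,7$, so the least and greatest among them bound all of $[e,w]$, not a six-element interval. And although these six elements do form a subposet of $[e,w]$ abstractly isomorphic to $[e,321]$, this does not preclude $[e,w]$ from being Boolean: for $n\ge4$ the Boolean lattice $B_n$ itself contains such a subposet, e.g.\ $\emptyset,\{1\},\{2\},[n]\setminus\{3\},[n]\setminus\{4\},[n]$.

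The clean repair is the one you already gesture at in your final paragraph: bypass~(1) and prove (2)~$\Leftrightarrow$~(3) directly. If $w$ contains $321$ then $w$ is not fully commutative, hence some reduced word contains a consecutive factor $s_is_{i\pm1}s_i$, and~(2) fails immediately. For $321$-avoiding $w$ the heap is well defined, and~(2) is precisely the statement that its labels are pairwise distinct; showing this is equivalent to $3412$-avoidance is a short heap argument. Your inductive ``fresh descent'' plan for (3)~$\Rightarrow$~(2) is also viable, but the heap route sidesteps the descent-selection difficulty you correctly flag as the main obstacle.
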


We will call a permutation satisfying the equivalent conditions of Theorem~\ref{thm:boolean-characterization} a \emph{Boolean} permutation. Theorem~\ref{thm:toric} suggests a connection between Boolean permutations and spherical varieties.

\begin{theorem}[Karuppuchamy \cite{Karuppuchamy}]
\label{thm:toric}
The Schubert variety $X_w$ is a toric variety if and only if $w$ is a Boolean permutation.
\end{theorem}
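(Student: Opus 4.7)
The plan is to use the natural action of the maximal torus $T \subset B$ on $X_w$. Since $X_w$ is normal (by the cited results of DeConcini--Lakshmibai and Ramanan--Ramanathan) and of dimension $\ell(w)$, toricity of $X_w$ is equivalent to the existence of a dense orbit under some torus action; rigidity of torus actions on normal projective varieties allows me to reduce to the natural $T$-action, since any toric structure can be conjugated so that its torus contains the image of $T$ in $\mathrm{Aut}^0(X_w)$. It thus suffices to compute the dimension of a generic $T$-orbit on the open Schubert cell $C_w = BwB/B$.

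For that computation, I would use the standard root-subgroup parameterization of $C_w$. Given a reduced word $w = s_{i_1} \cdots s_{i_\ell}$, the map
\[
(a_1, \ldots, a_\ell) \longmapsto u_{\beta_1}(a_1) \cdots u_{\beta_\ell}(a_\ell) \cdot wB
\]
is a $T$-equivariant isomorphism from $\mathbb{A}^\ell$ onto $C_w$, where $\beta_k = s_{i_1} \cdots s_{i_{k-1}}(\alpha_{i_k})$ enumerates the inversions of $w$ and $T$ acts diagonally with weight $\beta_k$ on the $k$-th coordinate (using that $T$ fixes the coset $wB$). The maximal $T$-orbit dimension on $X_w$ therefore equals the $\mathbb{Z}$-rank of $\{\beta_1, \ldots, \beta_\ell\}$ inside the character lattice of $T$.

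The crux is matching this rank to the Boolean condition. Each inversion expands as $\beta_k = \alpha_{i_k} + (\text{a non-negative integer combination of } \alpha_{i_1}, \ldots, \alpha_{i_{k-1}})$, so all of the $\beta_k$ lie in the lattice spanned by $\{\alpha_{i_1}, \ldots, \alpha_{i_\ell}\}$, whose rank is the number of \emph{distinct} simple reflections appearing in the word. When $w$ is not Boolean, this number is strictly less than $\ell(w)$ by Theorem~\ref{thm:boolean-characterization}(2), so the inversions cannot be linearly independent, no dense $T$-orbit exists, and $X_w$ is not toric. When $w$ is Boolean, the $s_{i_k}$ are distinct and the $\alpha_{i_k}$ are linearly independent; the expansion above is then upper-unitriangular in the basis $\{\alpha_{i_k}\}$, so the $\beta_k$ are also linearly independent, the generic $T$-orbit attains the full dimension $\ell(w)$, and normality upgrades this dense orbit to a toric structure.

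The main obstacle I anticipate is not the combinatorial rank computation (which is routine once the root-subgroup parameterization is in place) but the initial reduction from an abstract toric structure to the natural $T$-toric one, which requires a small argument about conjugacy of maximal tori in $\mathrm{Aut}^0(X_w)$ and the fact that a generic orbit of a subtorus of the canonical toric torus has dimension at most that of the subtorus.
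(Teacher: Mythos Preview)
The paper does not prove Theorem~\ref{thm:toric}; it is quoted as a result of Karuppuchamy and used only as motivational background (to suggest a link between Boolean permutations and spherical-type geometry). There is therefore no in-paper argument to compare your sketch against.

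On the sketch itself: the direction ``Boolean $\Rightarrow$ toric'' is sound. The root-subgroup parameterization of $C_w$ with weights $\beta_k$ is standard, and when the $s_{i_k}$ are distinct the unitriangular expansion shows the $\beta_k$ are independent, so the generic $T$-orbit on $C_w$ is dense and normality upgrades this to a toric structure.

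The direction ``toric $\Rightarrow$ Boolean'' has a real gap, and although you have correctly located it you have not closed it. Conjugating so that $\mathrm{im}(T) \subseteq T'$ (the big torus of an assumed toric structure) only yields $\dim \mathrm{im}(T) \le \dim T' = \ell(w)$, which is no contradiction: your rank computation gives $\dim \mathrm{im}(T) = r$ (the number of distinct simple reflections in a reduced word), and $r < \ell(w)$ is perfectly compatible with $\mathrm{im}(T)$ sitting as a \emph{proper} subtorus of $T'$. The sentence ``a generic orbit of a subtorus of the canonical toric torus has dimension at most that of the subtorus'' is true but vacuous here---orbit dimension is always bounded by group dimension. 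What you would actually need is that $\mathrm{im}(T)$ is a \emph{maximal} torus of $\mathrm{Aut}^0(X_w)$, i.e.\ that no torus acting faithfully on $X_w$ can have dimension exceeding $r$. That is not a ``small argument about conjugacy''; it is essentially the substance of this direction of the theorem, and it requires genuine input about $\mathrm{Aut}^0(X_w)$ or some other invariant (fixed-point or cohomological) that bounds the dimension of an arbitrary acting torus, not just of $T$.
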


\section{Divisible pairs of permutations}
\label{sec:divisible}
\begin{defin}
Given a pair $(v,w)$ of permutations from $S_n$, we say that $(v,w)$ is \emph{divisible after position $i$} if 
\[
\left| v[1,i] \cap w[1,i] \right| \leq i-2, 
\]
and \emph{divisible at position $i$} if $v_i=w_i$ and 
\[
\left| v[1,i] \cap w[1,i] \right| \leq i-1.
\]
We say simply that $(v,w)$ is \emph{divisible} if there exists $1 \leq i \leq n$ such that $(v,w)$ is divisible at or after position $i$.
\end{defin}

\begin{prop}
\label{prop:divisible-equals-not-boolean}
A pair $(v,w)$ of permutations from $S_n$ is divisible if and only if $v^{-1}w$ is not Boolean.
\end{prop}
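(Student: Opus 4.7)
The plan is to reduce the proposition to a statement purely about $u \coloneqq v^{-1}w$, and then relate that statement to Tenner's characterization of Boolean permutations as $321$- and $3412$-avoiders (Theorem~\ref{thm:boolean-characterization}). Writing $u[1,i] = v^{-1}(w[1,i])$ and using that $v$ restricts to a bijection $[1,i] \to v[1,i]$, one obtains the identity $|v[1,i] \cap w[1,i]| = |u[1,i] \cap [1,i]|$; likewise $v_i = w_i$ iff $u_i = i$. Setting $g(i) \coloneqq i - |u[1,i]\cap [1,i]| = |\{j \leq i : u_j > i\}|$, the pair $(v,w)$ is divisible iff there exists $i$ with either $g(i) \geq 2$, or $u_i = i$ and $g(i) \geq 1$. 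The task becomes showing this condition on $u$ holds iff $u$ contains a $321$ or $3412$ pattern.

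For the forward direction, one uses the standard permutation symmetry $g(i) = |\{j > i : u_j \leq i\}|$. If $g(i) \geq 2$, I would pick positions $a < b \leq i < c < d$ with $u_a, u_b > i \geq u_c, u_d$; splitting on whether $u_a < u_b$ and whether $u_c < u_d$ produces either a $321$ pattern (if either pair is decreasing) or the ordering $u_c < u_d < u_a < u_b$, which is a $3412$ pattern. If instead $u_i = i$ and $g(i) \geq 1$, there are $j < i < k$ with $u_j > i = u_i > u_k$, a $321$ pattern.

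For the converse, I would make a careful choice of witnessing $i$ for each pattern. Given a $3412$-pattern at $a < b < c < d$ with $u_c < u_d < u_a < u_b$, the inequalities rule out having both $b \geq u_a$ and $u_d \geq c$ (else $b \geq u_a > u_d \geq c > b$, absurd), so either $i = b$ puts $a,b$ in $L_i \coloneqq \{j \leq i : u_j > i\}$ (when $b < u_a$), or $i = c-1$ puts $c,d$ in $R_i \coloneqq \{j > i : u_j \leq i\}$ (when $u_d < c$); in either case $g(i) \geq 2$. Given a $321$-pattern at $a < b < c$ with $u_a > u_b > u_c$, split on the comparison of $u_b$ and $b$: if $u_b > b$ take $i = u_b-1$ (placing $a,b$ in $L_i$); if $u_b = b$ take $i = b$ (so $u_i = i$, with $a \in L_i$, yielding $g(i) \geq 1$); if $u_b < b$ take $i = u_b$ (placing $b,c$ in $R_i$, using $b > u_b = i$, $c > b$, $u_b \leq i$, and $u_c < u_b$). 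The main obstacle is simply organizing this case analysis cleanly; conceptually, the ``middle'' element of a $321$ pattern and the ``inner pair'' of a $3412$ pattern each dictate a natural threshold $i$ at which $g(i)$ becomes large.
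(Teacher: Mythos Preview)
Your proof is correct and follows essentially the same approach as the paper: both reduce to the case $v=e$ (you via the identity $|v[1,i]\cap w[1,i]|=|u[1,i]\cap[1,i]|$, the paper via the invariance of divisibility under left multiplication), and then perform the same case analysis linking the divisibility conditions to the presence of $321$ or $3412$ patterns. The function $g(i)$ is a clean organizing device, and your choices of threshold $i$ differ slightly from the paper's in a couple of subcases (e.g.\ you use $i=u_b-1$ or $i=u_b$ for the $321$ pattern where the paper uses $i=b$ or $i=b-1$), but the logic is the same.
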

\begin{proof}
It is clear from the definition that $(v,w)$ is divisible if and only if $(uv,uw)$ is divisible for all $u \in S_n$, so it suffices to prove the case $v=e$.

Suppose that $w$ is not Boolean, so that $w$ contains a pattern $p \in \{321,3412\}$ by Theorem~\ref{thm:boolean-characterization}. If $p=3412$ occurs as $w_{i_1}w_{i_2}w_{i_3}w_{i_4}$ then $(e,w)$ is divisible after position $i_2$, since $w[1,i_2]=\{w_{i_1},w_{i_2}\}$ while $e[1,i_2]=\{1,\ldots,i_2\}$ must contain $w_{i_3}$ and $w_{i_4}$ if it contains either $w_{i_1}$ or $w_{i_2}$. If $p=321$ occurs as $w_{i_1}w_{i_2}w_{i_3}$, consider three cases: If $w_{i_2}=i_2$ then $(e,w)$ is divisible at $i_2$, since $w_{i_1}>i_2 \not \in e[1,i_2]$; If $w_{i_2}<i_2$, then $(e,w)$ is divisible after $i_2-1$, since $w_{i_2},w_{i_3}$ both lie in $e[1,i_2-1]$ but not in $w[1,i_2-1]$; Similarly, if $w_{i_2}>i_2$, then $(e,w)$ is divisible after $i_2$, since $w_{i_1},w_{i_2}$ both lie in $w[1,i_2]$ but not in $e[1,i_2]$.

Conversely suppose that $w$ is divisible. If $w$ is divisible after position $i$, then there are two elements $a<b \in w[1,i]$ which are not in $e[1,i]=\{1,\ldots,i\}$ and therefore also two elements $c,d \in w[i+1,n]$ with $c<d \in \{1,\ldots,i\}$. Either $w^{-1}(a)<w^{-1}(b)$ and $w^{-1}(c)<w^{-1}(d)$ in which case $w$ contains $3412$ or at least one of these statements fails and $w$ contains $321$; in either case $w$ is not Boolean. If $w$ is divisible at position $i$, then $w_i=e_i=i$ and there is some $a>i$ in $w[1,i-1]$ and some $b<i$ in $w[i+1,n]$; then the values $a,i,b$ form a $321$ pattern in $w$, so $w$ is not Boolean.
\end{proof}

\begin{prop}[Gao--Hodges--Yong \cite{gao-hodges-yong}]
\label{prop:alternate-definition-of-spherical}
A permutation $w \in S_n$ is spherical if and only if $w_0(J(w))w$ is a Boolean permutation.
\end{prop}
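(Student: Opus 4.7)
The plan is to exploit the parabolic factorization $w = w_0(J(w)) \cdot v$ with $v \coloneqq w_0(J(w)) w$. Since $J(w)$ is the full set of left descents of $w$, a standard parabolic-coset argument (together with $w_0(J(w))$ being an involution) yields the length-additive identity $\ell(w) = \ell(w_0(J(w))) + \ell(v)$, with $v$ the unique minimum-length representative of the coset $W_{J(w)} w$, characterized by $J(v) \cap J(w) = \emptyset$. Consequently, any reduced word for $w_0(J(w))$ concatenated with any reduced word for $v$ is itself a reduced word for $w$; this factorization will drive both directions.

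For the ($\Leftarrow$) direction, assume $v$ is Boolean, so by Theorem~\ref{thm:boolean-characterization} it admits a reduced word $R_v$ in which each simple generator appears at most once. Prepending any reduced word $R_0$ for $w_0(J(w))$ gives a reduced word $R_0 R_v$ for $w$, and I would verify (S.1) and (S.2) directly. Condition (S.1) is immediate because $R_0 \in W_{J(w)}$ uses no generators from $I \setminus J(w)$, so each such generator appears at most once (from $R_v$). For (S.2), write $w_0(J(w)) = \prod_C w_0(C)$ length-additively over connected components of the Dynkin subdiagram on $J(w)$; then $R_0$ contributes exactly $\ell(w_0(C))$ letters from each $C$, while $R_v$ contributes at most $|C|$ more, giving the bound $\ell(w_0(C)) + |C|$.

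For the ($\Rightarrow$) direction, let $R$ be a reduced word for $w$ satisfying (S.1) and (S.2). Summing the bounds yields $\ell(v) = \ell(w) - \ell(w_0(J(w))) \leq (n-1-|J(w)|) + |J(w)| = n-1$, which is only necessary, not sufficient, for $v$ to be Boolean (e.g.\ $v = 3214 \in S_4$ has $\ell(v) = n-1$ but contains $321$). The goal is to show that $v$ admits a reduced word using each generator at most once, which by Theorem~\ref{thm:boolean-characterization} is equivalent to $v$ avoiding $321$ and $3412$. My preferred route is contrapositive: assume $v$ contains a $321$ or $3412$ pattern, and show that this structural obstruction propagates through $w = w_0(J(w)) v$ to force \emph{every} reduced word of $w$ to violate either (S.1) or (S.2). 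The main obstacle is that braid moves of the form $s_i s_{i+1} s_i = s_{i+1} s_i s_{i+1}$ alter per-generator counts, so (S.1) is not automatically preserved across different reduced words of $w$; overcoming this will likely require tracking an invariant refinement of the generator counts---for instance, one tied to how the $321$ or $3412$ pattern in $v$ sits inside $w$ after left multiplication by $w_0(J(w))$---which is the crux of the proof.
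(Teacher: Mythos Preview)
The paper does not supply its own proof of this proposition: it is stated with attribution to Gao--Hodges--Yong \cite{gao-hodges-yong} and then used as a black box (via Corollary~\ref{cor:spherical-not-divisible}) in the proof of Theorem~\ref{thm:main}. So there is no argument in the paper to compare against; I can only assess your proposal on its own merits.

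Your $(\Leftarrow)$ direction is complete and correct. The key point---that $J(w)$ being the \emph{full} left descent set forces the parabolic component $w_J$ to equal $w_0(J(w))$, hence $w = w_0(J(w))\,v$ length-additively with $v$ the minimal coset representative---is exactly right, and once you have that factorization the verification of (S.1) and (S.2) from a Boolean reduced word for $v$ is straightforward, as you wrote.

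Your $(\Rightarrow)$ direction, however, is not a proof but a description of where the difficulty lies. You correctly observe that summing the constraints only gives $\ell(v)\le n-1$, which is insufficient, and you correctly identify the obstruction that braid moves change per-generator multiplicities so that (S.1) and (S.2) are genuinely conditions on \emph{some} reduced word rather than invariants of $w$. But you stop there: the sentence ``overcoming this will likely require tracking an invariant refinement of the generator counts\ldots which is the crux of the proof'' is a statement of what remains to be done, not an argument. In particular you have not produced, from a $321$ or $3412$ pattern in $v$, any concrete obstruction that survives across all reduced words of $w$; nor have you shown how to manufacture from the hypothesized reduced word $R$ of $w$ a reduced word of $v$ with each generator used at most once. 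Until one of those is supplied, the forward implication is open in your write-up.
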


The following characterization of spherical permutations will be convenient for our arguments in Section~\ref{sec:proof}.

\begin{cor}
\label{cor:spherical-not-divisible}
A permutation $w \in S_n$ is spherical if and only if $(w_0(J(w)),w)$ is not divisible. 
\end{cor}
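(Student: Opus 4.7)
The plan is to derive the corollary immediately from the two propositions just stated, via the observation that $w_0(J)$ is an involution.

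Specifically, set $v = w_0(J(w))$. Proposition~\ref{prop:divisible-equals-not-boolean} tells us that $(v,w)$ is divisible if and only if $v^{-1}w$ is not Boolean, so $(v,w)$ fails to be divisible if and only if $v^{-1}w$ is Boolean. On the other hand, Proposition~\ref{prop:alternate-definition-of-spherical} says that $w$ is spherical if and only if $vw$ is Boolean. To match these up, it suffices to show $v^{-1} = v$.

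The key step is therefore to observe that $v = w_0(J(w))$ is an involution. This is a standard fact about longest elements of finite parabolic subgroups, but we can also see it concretely from the explicit description of $w_0(J)$ given at the end of Section~\ref{sec:background}: the one-line notation of $w_0(J)$ is a concatenation of decreasing runs on consecutive intervals, so $w_0(J)$ is a product of disjoint reverse permutations, each of which is an involution. Hence $v^{-1} = v$, so $v^{-1}w = vw$, and the two characterizations agree. There is no real obstacle here beyond checking the orientation of the composition $v^{-1}w$ versus $vw$, which is exactly what the involution property resolves.
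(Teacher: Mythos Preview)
Your argument is correct and is essentially identical to the paper's own proof: both combine Proposition~\ref{prop:alternate-definition-of-spherical} with Proposition~\ref{prop:divisible-equals-not-boolean} via the observation that $w_0(J(w))$ is an involution. The only difference is that you spell out why $w_0(J)$ is an involution, whereas the paper simply asserts it.
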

\begin{proof}
By Proposition~\ref{prop:alternate-definition-of-spherical}, $w$ is spherical if and only if $w_0(J(w))w$ is Boolean. Since $w_0(J(w))$ is an involution, Proposition~\ref{prop:divisible-equals-not-boolean} implies that this is equivalent to $(w_0(J(w)),w)$ not being divisible.
\end{proof}

\section{Proof of Theorem~\ref{thm:main}}
\label{sec:proof}
The following decomposition of the set $P$ of twenty one patterns will be crucial to the proof of Theorem~\ref{thm:main}: $P=P^{321} \cup P^{3412}$, where 
\begin{align*}
    P^{321}&=\{24531,25314,25341,42531,45231,45312,52314,52341,53124,53142,53412\} \\
    &=\{p \in P \: | \: w_0(J(p))p \text{ contains the pattern $321$}\},
\end{align*}
and
\begin{align*}
    P^{3412}&=\{34512,34521,35412,35421,45123,45213,45231,53412,53421,54123,54213,54231\} \\
     &=\{p \in P \: | \: w_0(J(p))p \text{ contains the pattern $3412$}\}.
\end{align*}
Notice that $45231$ and $53412$ lie in both $P^{321}$ and $P^{3412}$. A simple check shows that $P^{321}$ and $P^{3412}$ are also characterized by the following properties:
\begin{align}
    P^{321}=\{p \in S_5 \: | &\: p^{-1}(5)<p^{-1}(3)<p^{-1}(1), p^{-1}(4) \not \in [p^{-1}(5),p^{-1}(3)], \\ \nonumber &p^{-1}(2) \not \in [p^{-1}(3),p^{-1}(1)] \}, \\
    P^{3412}=\{p \in S_5 \: |& \: \max(p^{-1}(4),p^{-1}(5))<\min(p^{-1}(1),p^{-1}(2)),  \\ \nonumber &p^{-1}(3) \not \in [p^{-1}(4),p^{-1}(2)] \}.
\end{align}

The following proposition is obvious from the definitions, but will be useful to keep in mind throughout the proofs of Lemmas~\ref{lem:avoid-implies-spherical} and \ref{lem:spherical-implies-avoid}.

\begin{prop}
\label{prop:v-compared-to-w}
For $w \in S_n$, let $v=w_0(J(w))$ and $1 \leq a < b \leq n$. Then $v^{-1}(b)<v^{-1}(a)$ if and only if $w^{-1}(b)<w^{-1}(b-1)<\cdots < w^{-1}(a)$.
\end{prop}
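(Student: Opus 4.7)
The plan is to unfold both sides of the claimed equivalence in terms of the left descent set $J(w)$, using the explicit one-line description of $w_0(J(w))$ recalled in Section~\ref{sec:background}. The argument amounts to careful bookkeeping, with no real obstacle, as the text itself notes.

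First I would observe that by the definition of the left descent set, $s_i \in J(w)$ if and only if $w^{-1}(i+1) < w^{-1}(i)$. Consequently, the right-hand condition $w^{-1}(b) < w^{-1}(b-1) < \cdots < w^{-1}(a)$ is equivalent to the assertion that all of $s_a, s_{a+1}, \ldots, s_{b-1}$ lie in $J(w)$, i.e., that the indices from $a$ to $b$ form an unbroken block of consecutive descents.

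Next I would turn to the structure of $v = w_0(J(w))$. By the explicit description recalled in Section~\ref{sec:background}, the one-line notation of $v$ is a concatenation of decreasing runs: each maximal block $s_i, \ldots, s_{i+d-1} \in J(w)$ (with $s_{i-1}, s_{i+d} \notin J(w)$) produces a run $i+d, i+d-1, \ldots, i$, and the runs themselves appear in increasing order of the values they contain. Therefore, for $a < b$, the condition $v^{-1}(b) < v^{-1}(a)$ holds exactly when $a$ and $b$ appear inside a single decreasing run, which by the block description is equivalent to $s_a, s_{a+1}, \ldots, s_{b-1}$ all belonging to $J(w)$.

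Since both sides of the proposition reduce to the same condition on $J(w)$, the equivalence follows. The only subtlety is to keep the blockwise structure straight: within a single run the relative order of positions is reversed, while values in distinct runs are ordered in the same direction as the runs themselves. Once this is parsed correctly the verification is immediate.
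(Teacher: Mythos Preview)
Your argument is correct and is exactly the unpacking of definitions that the paper has in mind when it declares the proposition ``obvious from the definitions'' without further proof. Both sides reduce to the condition $s_a,\ldots,s_{b-1}\in J(w)$, as you show.
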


\begin{lemma}
\label{lem:avoid-implies-spherical}
If $w \in S_n$ avoids the patterns from $P$ then $w$ is spherical.
\end{lemma}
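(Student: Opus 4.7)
I prove the contrapositive: assuming $(v,w)$ is divisible, where $v=w_0(J(w))$, I produce a pattern from $P$ in $w$; this suffices by Corollary~\ref{cor:spherical-not-divisible}.

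By definition of divisibility there is a position $i$ such that either $(v,w)$ is divisible after $i$ or at $i$. In the ``after $i$'' case one extracts $a<b$ in $w[1,i]\setminus v[1,i]$ and $c<d$ in $v[1,i]\setminus w[1,i]$, so that $w^{-1}(a),w^{-1}(b)\leq i<w^{-1}(c),w^{-1}(d)$; in the ``at $i$'' case one obtains $m:=w_i=v_i$ along with single values $a\in w[1,i-1]\setminus v[1,i]$ and $c\in v[1,i-1]\setminus w[1,i]$. The reverse direction of Proposition~\ref{prop:divisible-equals-not-boolean} extracts a $321$ or $3412$ pattern in $w$ from analogous data when $v=e$; here one must first recover a similar length-$3$ or length-$4$ witness for general $v=w_0(J(w))$, then extend it to a length-$5$ pattern from $P=P^{321}\cup P^{3412}$.

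The two main structural tools are: (i) the one-line notation of $v$ is a concatenation of decreasing runs of consecutive integers (``blocks''), whose position-intervals coincide with their value-intervals, so $v^{-1}(a)>i$ forces the block containing $a$ to extend past position $i$; and (ii) Proposition~\ref{prop:v-compared-to-w}, which converts an inversion $v^{-1}(b)<v^{-1}(a)$ into a full decreasing chain $w^{-1}(b)<w^{-1}(b-1)<\cdots<w^{-1}(a)$ in $w$. Applying (ii) to suitable pairs among the extracted values both recovers the $321$/$3412$ witness and produces intermediate integers whose positions in $w$ supply the fifth entry of a length-$5$ pattern in $P$. The forbidden-position characterizations (1) and (2) of $P^{321}$ and $P^{3412}$ should hold for this fifth entry because the intermediate integers are pinned inside their block's position-interval, which places them outside the forbidden position ranges dictated by (1) and (2).

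The main obstacle I foresee is the bookkeeping of the case analysis. Subcases split on the value ordering among the extracted elements, on which blocks of $v$ contain them (same block vs.\ distinct blocks), and on how those blocks straddle position $i$. Each subcase should close by locating the appropriate fifth value from the block structure and verifying the constraint from (1) or (2); the fact that $45231$ and $53412$ lie in $P^{321}\cap P^{3412}$ corresponds to subcases where both types of length-$5$ pattern could be exhibited.
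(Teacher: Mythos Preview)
Your proposal identifies the right overall strategy---prove the contrapositive via Corollary~\ref{cor:spherical-not-divisible}, split on ``divisible after $i$'' versus ``at $i$'', and use the block structure of $v=w_0(J(w))$ together with Proposition~\ref{prop:v-compared-to-w} to locate a fifth value---and this is exactly the route the paper takes. But what you have written is a plan, not a proof: you explicitly defer ``the bookkeeping of the case analysis'' and never verify a single one of the subcases. Since the entire content of the lemma lies in that bookkeeping, the proposal as it stands has a genuine gap.

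Beyond the missing execution, several key ideas that the paper relies on are absent even from your outline, and without them the case analysis cannot close. First, the paper takes $i$ \emph{minimal} with the divisibility property; this is what forces the symmetric differences $v[1,i]\triangle w[1,i]$ to have exactly two (resp.\ one) element(s) in the ``after'' (resp.\ ``at'') case, and in the ``at'' case it is also what pins $a$ as the minimum of its decreasing run and $c$ as the maximum of its run. Second, you never invoke the Bruhat relation $v\leq w$ (via $v\leq_R w$ and Proposition~\ref{prop:bruhat-implies-prefix-dominates}); this is what orders the extracted values and in particular gives $a<c$, $b<d$ in Case~1. Third, the paper needs a nontrivial argument to produce an actual numerical gap (showing $b+1<c$ in Case~1, and $a<v_i-1$, $c>v_i+1$ in Case~2) before any ``fifth value'' can exist; your sketch assumes such intermediate integers are available without justifying their existence. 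Finally, even once a candidate fifth value is located, one must argue, case by case, that its position in $w$ satisfies the constraints (1) or (2) defining $P^{321}$ or $P^{3412}$; your assertion that block position-intervals ``place them outside the forbidden position ranges'' is not an argument and is not always how the paper proceeds (in some subcases a contradiction with Proposition~\ref{prop:v-compared-to-w} is reached instead).
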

\begin{proof}
We reformulate using Corollary~\ref{cor:spherical-not-divisible} and prove the contrapositive: if $(w_0(J(w)),w)$ is divisible, then $w$ contains a pattern from $P$.

\textit{Case 1:} Write $v$ for $w_0(J(w))$ and suppose that $(v,w)$ is divisible after $i$, and furthermore that $i$ is the smallest index for which this is true. Then we have:
\begin{align*}
v[1,i] \setminus w[1,i] &= \{a,b\}, \\
w[1,i] \setminus v[1,i] &= \{c,d\},
\end{align*}
where we may assume without loss of generality that $a<b$ and $c<d$. We have $v \leq_R w$, so in particular $v \leq w$ in Bruhat order; thus by Proposition~\ref{prop:bruhat-implies-prefix-dominates} we must have $a<c$ and $b<d$. Suppose that $c \leq b+1$; if $c<b$, then, since $c$ appears after $b$ in $v$, it must be that $b,c$ lie in the same decreasing run of $v$, but by Proposition~\ref{prop:v-compared-to-w} this implies that $b$ appears before $c$ in $w$, a contradiction. If $c=b+1$, then $s_c$ is a descent of $w$, so $c$ appears before $b$ in $v$, again a contradiction. Thus we have $a<b<b+1<c<d$. 

We wish to conclude that $w$ contains a pattern from $P$. If any value $x \in \{b+1,b+2,\ldots,c-1\}$ does not lie between $b$ and $c$ in $w$, then we are done, since the values $\{a,b,c,d,x\}$ form a pattern from $P^{3412}$ in $w$. Otherwise, all of these values appear between $b$ and $c$ in $w$. Suppose that they do not appear in decreasing order, so $w^{-1}(b+j)<w^{-1}(b+j+1)$ for some $j+1<c-b$. Then the values $\{c,d,b+j,b+j+1,a,b\}$ either contain a pattern from $P^{3412}$, or appear in $w$ in the order $c,b+j,d,a,b+j+1,b$. In this last case $c,b+j,a,b+j+1,b$ forms an occurrence of the pattern $53142$ from $P^{321}$. Finally, suppose that $\{b+1,b+2,\ldots,c-1\}$ appear in decreasing order in $w$ between $b$ and $c$; then by Proposition~\ref{prop:v-compared-to-w} $c$ appears before $b$ in $v$, a contradiction. Thus in all cases $w$ contains a pattern from $P$.

\textit{Case 2:} Write $v$ for $w_0(J(w))$ and suppose that $(v,w)$ is divisible at $i$, and furthermore that $i$ is the smallest index at or after which $(v,w)$ is divisible. Then we have $v_i=w_i$ and \begin{align*}
    v[1,i-1] \setminus w[1,i-1] &= \{a\}, \\
w[1,i-1] \setminus v[1,i-1] &= \{c\},
\end{align*}
with $a<c$ by Proposition~\ref{prop:bruhat-implies-prefix-dominates}. We claim that $a$ is the minimal element in a decreasing run of $v$. Indeed, otherwise $a-1$ appears immediately after $a$ in $v$, and thus $a-1$ also appears after $a$ in $w$. But then $a-1 \in v[1,i] \setminus w[1,i]$, contradicting the minimality of $i$, thus $a$ is the minimal element in a decreasing run, and is smaller than all values appearing after it in $v$. Similarly, $c$ is the maximal element in a decreasing run of $v$ and is larger than all values appearing before it in $v$. Also note that $a<v_i-1$ and $c>v_i+1$, for if $a=v_i-1$ then $w^{-1}(a+1) < w^{-1}(a)$ but $v^{-1}(a+1)>v^{-1}(a)$, contradicting Proposition~\ref{prop:v-compared-to-w}, and similarly for $c$.

We will now see that $c,v_i,a$ participate in an occurrence in $w$ of some pattern $p \in P$. Suppose first that all values $c-1,c-2,\ldots,v_i+1$ lie in between $c$ and $v_i$ in $w$. If these occur in decreasing order, then $c$ and $v_i$ must occur in the same decreasing run of $v$, but this is not the case since $c$ appears at the beginning of its run, but after $v_i$ in $v$. Thus there is some $j \leq c-v_i-2$ such that $w^{-1}(c-j-1)<w^{-1}(c-j)$. In this case $c,c-j-1,c-j,v_i,a$ form an occurrence in $w$ of the pattern $53421 \in P^{3412}$. Similarly, if all values $v_i-1,v_i-2,\ldots,a+1$ lie in between $v_i$ and $a$ in $w$, then $w$ contains an occurrence in $w$ of the pattern $54231 \in P$.

In the only remaining case, there is some $x \in \{c-1,c-2,\ldots,v_i+1\}$ not lying between $c$ and $v_i$ in $w$ and some $y \in \{v_i-1,v_i-2,\ldots,a+1\}$ not lying between $v_i$ and $a$ in $w$. Then the values $\{c,v_i,a,x,y\}$ form a pattern $p$ from $P^{321}$ in $w$, with $c,v_i,a$ corresponding to the values $5,3,1$ in $p$ respectively and $x,y$ corresponding to $4,2$.
\end{proof}

\begin{lemma}
\label{lem:spherical-implies-avoid}
If $w\in S_n$ is spherical then $w$ avoids the patterns from $P$.
\end{lemma}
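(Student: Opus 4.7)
My plan is to reduce to showing that $vw$ contains a $321$ or $3412$ pattern, where $v := w_0(J(w))$; by Proposition~\ref{prop:alternate-definition-of-spherical} together with Theorem~\ref{thm:boolean-characterization}, $w$ is spherical if and only if $vw$ avoids both of these patterns. Fix an occurrence of $p \in P$ in $w$ at positions $i_1 < \cdots < i_5$, and set $\alpha_k := w_{i_{p^{-1}(k)}}$, so that $\alpha_1 < \cdots < \alpha_5$. The underlying structural fact is that the descent set $J(w)$ partitions $\{1,\ldots,n\}$ into maximal runs of consecutive descents $[m, m+d]$, on each of which $v$ acts by the reversal $k \mapsto 2m+d-k$, being the identity elsewhere. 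Two consequences drive the argument: if $\alpha_i < \alpha_j$ lie in different blocks then $v(\alpha_i) < v(\alpha_j)$; and if $\alpha_i, \alpha_j$ lie in a common block then the whole interval $[\alpha_i, \alpha_j]$ is in the block, and these consecutive values appear in $w$ in strictly decreasing positional order, by telescoping $w^{-1}(k+1) < w^{-1}(k)$ across each descent.

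For $p \in P^{3412}$ I would verify that no element of $\{\alpha_1, \alpha_2\}$ shares a $v$-block with any element of $\{\alpha_4, \alpha_5\}$: otherwise the block would contain $\alpha_3$ as well, giving $w^{-1}(\alpha_4) < w^{-1}(\alpha_3) < w^{-1}(\alpha_2)$ and hence $w^{-1}(\alpha_3) \in [w^{-1}(\alpha_4), w^{-1}(\alpha_2)]$, contradicting the defining condition $p^{-1}(3) \notin [p^{-1}(4), p^{-1}(2)]$; the remaining three pairs reduce to the same contradiction. Hence $v(\alpha_i) < v(\alpha_j)$ for all $i \in \{1,2\}$ and $j \in \{4,5\}$. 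Combined with $\max(w^{-1}(\alpha_4), w^{-1}(\alpha_5)) < \min(w^{-1}(\alpha_1), w^{-1}(\alpha_2))$, the four values of $vw$ at the positions $w^{-1}(\alpha_1), w^{-1}(\alpha_2), w^{-1}(\alpha_4), w^{-1}(\alpha_5)$, read in positional order, realize one of the four permutations $3412, 3421, 4312, 4321$; every one of these contains $3412$ or $321$ as a sub-pattern.

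For $p \in P^{321}$ the argument is similar but shorter: I show that $\alpha_3, \alpha_5$ lie in different blocks of $v$ (else $\alpha_4$ also lies in the block, forcing $w^{-1}(\alpha_4) \in [w^{-1}(\alpha_5), w^{-1}(\alpha_3)]$ against the characterization), and likewise $\alpha_1, \alpha_3$ lie in different blocks (using the constraint on $\alpha_2$). Hence $v(\alpha_1) < v(\alpha_3) < v(\alpha_5)$, and combined with $w^{-1}(\alpha_5) < w^{-1}(\alpha_3) < w^{-1}(\alpha_1)$ the three values $v(\alpha_5), v(\alpha_3), v(\alpha_1)$ of $vw$ at those positions form a $321$ pattern. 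The step most worth double-checking is the telescoping assertion that all values in a single $v$-block occur in $w$ in strictly decreasing positional order; this is the common engine of both cases and the only place where the block structure of $v$ really enters.
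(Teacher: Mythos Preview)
Your proof is correct and follows essentially the same approach as the paper. The paper also proves the contrapositive via Proposition~\ref{prop:alternate-definition-of-spherical} and Theorem~\ref{thm:boolean-characterization}, and your ``block'' language for $v=w_0(J(w))$ together with the telescoping assertion is exactly the content of Proposition~\ref{prop:v-compared-to-w}; your two cases for $P^{321}$ and $P^{3412}$ then match the paper's argument almost line for line, with the same conclusion that $vw$ contains a $321$ or $3412$ pattern.
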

\begin{proof}
We will apply Proposition~\ref{prop:alternate-definition-of-spherical} and prove the contrapositive: if $w$ contains a pattern $p$ from $P$, then $vw$ is not Boolean, where $v=w_0(J(w))$.

Suppose first that $w$ contains a pattern $p$ from $P^{321}$ and that $w_i,w_j,w_k$ with $i<j<k$ correspond to the values $5,3,1$ in $p$ respectively. Since the $2$ and $4$ in the pattern $p$ do not lie between $w_j,w_k$ and $w_i,w_j$ respectively, Proposition~\ref{prop:v-compared-to-w} implies that $v(w_i)>v(w_j)>v(w_k)$ since $v=v^{-1}$. Thus $vw$ contains the pattern $321$ and is not Boolean by Theorem~\ref{thm:boolean-characterization}.

Suppose now that $w$ contains a pattern $p$ from $P^{3412}$. Let $w_i,w_j,w_k,w_{\ell}$ with $i<j<k<\ell$ correspond to the values $\{1,2,4,5\}$ from $p$ (thus one of $w_i,w_j$ corresponds to $4$ and the other to $5$, while one of $w_k,w_{\ell}$ corresponds to $1$ and the other $2$). Since the $3$ in the pattern $p$ does not lie between the $2$ and the $4$, Proposition~\ref{prop:v-compared-to-w} implies that $\min(v(w_i),v(w_j))>\max(v(w_k),v(w_{\ell}))$. Thus either $vw$ contains the pattern $3412$ in these positions or contains a $321$ pattern in some subset of them. In either case $vw$ is not Boolean.
\end{proof}

Lemmas~\ref{lem:avoid-implies-spherical} and \ref{lem:spherical-implies-avoid} together yield Theorem~\ref{thm:main}.

\bibliographystyle{plain}
\bibliography{main}
\end{document}